\documentclass[12pt,reqno]{amsart}
\usepackage{amsmath}
\usepackage{amsthm}
\usepackage{amssymb}
  \usepackage{bm}
  \usepackage{geometry}
    \geometry{left=2.5cm,right=2.5cm,top=2.5cm,bottom=2.5cm}
  \usepackage[mathscr]{euscript}
  \usepackage{hyperref}
  \usepackage{url}
\newcommand{\bburl}[1]{\textcolor{blue}{\url{#1}}}
  \usepackage{paralist}
  \usepackage{tikz}
	\usepackage{amsthm}
    \theoremstyle{plain}
\usepackage{amsfonts}
\usepackage{geometry}
\usepackage{enumitem}
\usepackage {amssymb}
\usepackage{amsthm}
\usepackage{fullpage}
\usepackage{gensymb}

\usepackage{mathrsfs} 

\usepackage{verbatim}

\numberwithin{equation}{section}
\numberwithin{part}{section}

\newtheorem{lemma}[part]{Lemma}

\newtheorem{thm}[part]{Theorem}

\newtheorem{prop}[part]{Proposition}


\newcommand{\Z}{{\mathbb Z}}

\newcommand{\R}{{\mathbb R}}

\newcommand{\abs}[1]{\lvert#1\rvert}
\newcommand{\qw}[1]{\left(#1\right)}
\newcommand{\mc}[1]{\mathcal{#1}}
\newcommand{\norm}[1]{\left\|#1\right\|}
\newcommand{\pez}[1]{\left(#1\right)}

\DeclareMathAlphabet{\curly}{U}{rsfs}{m}{n}  
 
\newcommand{\bx}{\mathbf{x}}
\newcommand{\by}{\mathbf{y}}


\title{Dimensional lower bounds for Falconer type incidence theorems}
\author{Jonathan De{W}itt, Kevin Ford, Eli Goldstein, Steven J. Miller, Gwyneth Moreland, Eyvindur A. Palsson, Steven Senger}
\address{Department of Mathematics \& Statistics, Haverford College, Haverford, PA}
\email{\textcolor{blue}{jdewitt@haverford.edu}}
\address{Department of Mathematics, University of Illinois at Urbana-Champaign, Urbana, IL}
\email{\textcolor{blue}{ford@math.uiuc.edu}}
\address{Department of Mathematics \& Statistics, Williams College, Williamstown, MA}
\email{\textcolor{blue}{esg2@williams.edu}}
\address{Department of Mathematics \& Statistics, Williams College, Williamstown, MA}
\email{\textcolor{blue}{sjm1@williams.edu}, \textcolor{blue}{Steven.Miller.MC.96@aya.yale.edu}}
\address{Department of Mathematics, University of Michigan, Ann Arbor, MI}
\email{\textcolor{blue}{gwynm@umich.edu}}
\address{Department of Mathematics \& Statistics, Williams College, Williamstown, MA}
\curraddr{Department of Mathematics, Virginia Tech, Blacksburg, VA}
\email{\textcolor{blue}{eap2@williams.edu}, \textcolor{blue}{palsson@vt.edu}}
\address{Department of Mathematics, Missouri State University, Springfield, MO}
\email{\textcolor{blue}{StevenSenger@MissouriState.edu}}
\date{\today}
\thanks{The first, third, fourth and fifth authors were supported in part by National Science Foundation grants DMS1265673, DMS1561945 and DMS1347804. The second listed author was supported in part by National Science Foundation grant DMS1501982 and the sixth listed author was supported supported in part by Simons Foundation Grant \#360560.}


\begin{document}
\begin{abstract}
Let $1 \leq k \leq d$ and consider a subset $E\subset \mathbb{R}^d$. In this paper, we study the problem of how large the Hausdorff dimension of $E$ must be in order for the set of distinct noncongruent $k$-simplices in $E$ (that is, noncongruent point configurations of $k+1$ points from $E$) to have positive Lebesgue measure. This generalizes the $k=1$ case, the well-known Falconer distance problem and a major open problem in geometric measure theory. Many results on Falconer type theorems have been established through incidence theorems, which generally establish sufficient but not necessary conditions for the point configuration theorems. We establish a dimensional lower threshold of $\frac{d+1}{2}$ on incidence theorems for $k$-simplices where $k\leq d \leq 2k+1$ by generalizing an example of Mattila. We also prove a dimensional lower threshold of $\frac{d+1}{2}$ on incidence theorems for triangles in a convex setting in every dimension greater than $3$. This last result generalizes work by Iosevich and Senger on distances that was built on a construction by Valtr. The final result utilizes number-theoretic machinery to estimate the number of solutions to a Diophantine equation.
\end{abstract}

\maketitle

\section{Introduction}

The Falconer distance problem, introduced in \cite{F85}, can be stated as follows: How large does the Hausdorff dimension of $E\subset\mathbb{R}^d$ need to be to ensure that the Euclidean distance set $\Delta(E)=\{\, |x-y|\, : x,y \in E\}\subset\mathbb R$ has positive one-dimensional Lebesgue measure? This problem can be viewed as a continuous analogue of the famous Erd\H{o}s distinct distance problem \cite{GIS,M95}. The current best  partial results, due to Wolff \cite{W99} in the plane and Erdo\u{g}an \cite{E05} in higher dimensions, say that the one-dimensional Lebesgue measure of $\Delta(E)$, denoted ${\mathcal L}^1(\Delta(E))$, is indeed positive if $\dim_{{\mathcal H}}(E)>\frac{d}{2}+\frac{1}{3}$ where $\dim_{{\mathcal H}}(E)$ denotes the Hausdorff dimension of $E$. As distance is a configuration that only involves two points, analogous questions can be posed for configurations that involve more points.  For example, we may consider the set of noncongruent triples of points in $E$, that is, points which form noncongruent triangles. In the discrete setting such questions have been studied for decades \cite{PS}, while recently there has been a flurry of activity in the continuous setting where angles \cite{IMP}, simplices \cite{EIH,GI,GILP15, GILP16}, volumes \cite{GIM}, and a more general approach to multi-point configurations \cite{GGIP} have been examined.

In this paper the point configurations we focus on are simplices. For $d \ge 2$ and $1 \le k \le d$ we say, following \cite{GILP15}, that the \textit{set of distinct noncongruent $k$-simplices determined by $E \subseteq \R^d$} is $T_{k,d}(E) := E^{k+1}{/\sim}$, where $(\bx_1,\dots,\bx_{k+1}) \sim (\by_1, \dots, \by_{k+1})$ provided that $(\bx_1,\dots,\bx_{k+1})$, $(\by_{1},\dots,\by_{k+1})$ form non-degenerate $k$-simplices and $|\bx_i - \bx_j| = |\by_i - \by_j|$ for all $1 \le i < j \le k+1$. We can map $T_{k,d}(E) \hookrightarrow \R^{\binom{k+1}{2}}$ by mapping a $k$-simplex to the $\binom{k+1}{2}$-tuple of its distances, thus it makes sense to take the $\binom{k+1}{2}$-dimensional Lebesgue measure of $T_{k,d}(E)$. Define $\alpha_{k,d}$ to be the infimum of all $\alpha$ for which $\text{dim}_{\mathcal{H}}(E) > \alpha$ implies $\mathcal{L}^{\binom{k+1}{2}}(T_{k,d}(E)) > 0$. The first Falconer type theorem for simplices was established by Greenleaf and Iosevich \cite{GI}, where, in the special case of triangles in the plane, they established the upper bound $\alpha_{2,2}\leq \frac{7}{4}$. This result was extended by Grafakos, Greenleaf, Iosevich and the sixth listed author \cite{GGIP} to the following upper bound for all simplices in all dimensions: $\alpha_{k,d}\leq d-\frac{d-1}{2k}$. These results were further improved by Greenleaf, Iosevich, Lu and the sixth listed author \cite{GILP15} to $\alpha_{2,2}\leq \frac{8}{5}$ and in the general case $\alpha_{k,d}\leq \frac{dk+1}{k+1}$. Using a different approach Erdo\u{g}an, Iosevich and Hart \cite{EIH} obtained the upper bound $\alpha_{k,d}\leq \frac{d+k+1}{2}$, which recently has been improved by Greenleaf, Iosevich, Lu and the sixth listed author \cite{GILP16} to $\alpha_{k,d}\leq \frac{d+k}{2}$. In certain situations these bounds beat the previous ones, but in some of the most natural situations, such as when $d=k$, they only give trivial information.

All these positive results naturally lead to the question of whether they are sharp. Using a set obtained by a suitable scaling of the thickened integer lattice, Falconer \cite{F86} showed a lower bound of $\frac{d}{2}$ for his distance problem, i.e., $\frac{d}{2}\leq \alpha_{1,d}$. This led him to conjecture $\alpha_{1,d}=\frac{d}{2}$, which remains open. For higher order simplices the trivial observation that $\max\left\{k-1,\frac{d}{2}\right\}\leq \alpha_{k,d}$ had been made. The first part of the lower bound says that there are not many $k$-simplices in a $k-1$-dimensional set, e.g. there are not many triangles on a line. The second part of the lower bound says that if there are many different $k$-simplices then there are many different distances, therefore Falconer's lower bound for distances also applies for $k$-simplices. The only non-trivial lower bound for higher order simplices is one for triangles in the plane, obtained by Erdo\u{g}an and Iosevich, but first published in \cite{GILP15}, that says $\frac{3}{2}\leq \alpha_{2,2}$. This was obtained by counting triangles in an integer lattice. In summary, not much is known about sharpness of results but in this paper we address some questions about the sharpness of the techniques used.

\subsection{Lower bounds for Falconer type incidence theorems}
In his original paper Falconer obtained the threshold $\alpha_{1,d}\ge\frac{d}{2}+\frac{1}{2}$ by proving an incidence theorem. He showed that if the Hausdorff dimension of $E$ is above $\frac{d}{2}+\frac{1}{2}$ then uniform estimates for $t>0$ of the form
\begin{equation}\label{Falconer}
\nu \times \nu \{(\bx,\by): t - \epsilon \ \le \  |\bx-\by|\ \le \  t + \epsilon \}\ \ll_{d,E}\ \epsilon\end{equation}
hold for a Frostman measure $\nu$ supported on $E$. A Frostman measure is a probability measure, so one can interpret the above inequality as the probability that $|\bx-\by|$ is near a fixed distance $t$. In \cite{GI} and \cite{GGIP}, a similar approach was taken and incidence theorems of the type
\begin{equation}\label{estinc}
\nu \times \dots \times \nu \{(\bx_1, \dots, \bx_{k+1}): t_{ij}-\epsilon \ \le \  |\bx_i-\bx_j| \ \le \  t_{ij}+\epsilon \;\; (i\ne j) \}\ \ll_{k,d,E}\ \epsilon^{\binom{k+1}{2}},
\end{equation}
where ${\{t_{ij} \}}_{1 \leq i<j \leq k+1}$ is a collection of positive real numbers, were proven.

In \cite{M85}, Mattila showed for $d=2$ that Falconer's incidence theorem \eqref{Falconer} does not in general hold if the Hausdorff dimension of $E$ is strictly less than $\frac{d}{2}+\frac{1}{2}=\frac32$. This means that his original approach is sharp in terms of the technique used. Note that this does not imply that his distance theorem is sharp, and as mentioned before, it has since been improved from $\frac{d}{2}+\frac{1}{2}$ down to $\frac{d}{2}+\frac{1}{3}$. In \cite{GI}, Greenleaf and Iosevich extended Mattila's example to triangles in the plane ($k=2$,$d=2$) and showed that the incidence theorem they obtained does not in general hold if the Hausdorff dimension of $E$ is strictly less than $\frac{7}{4}$, which shows that their incidence theorem is sharp. Again this does not imply that the point configuration problem is sharp and indeed the dimensional threshold was improved to $\frac{8}{5}$ in \cite{GILP15}. This brings us to the first result of the paper.

\begin{prop}\label{prop:lowerboundincidencethm}
For any $k$ and $d$ where $k\le d\le 2k+1$,
the incidence estimate for $k$-simplices, i.e., the estimate given in \eqref{estinc}, can fail for measures supported on sets with Hausdorff dimension less than $\frac{d+1}{2}$.
\end{prop}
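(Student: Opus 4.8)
The plan is to exhibit, for a sequence of scales $\epsilon\to 0$, a probability measure $\nu_\epsilon$ supported on a union of $N\asymp\epsilon^{-s}$ disjoint $\epsilon$-balls with $s<\tfrac{d+1}{2}$, for which the left-hand side of \eqref{estinc} exceeds $\epsilon^{\binom{k+1}{2}}$ by a fixed power of $\epsilon$ for a well-chosen profile $\{t_{ij}\}$. A standard nested (self-similar) gluing of these single-scale models then produces one set of Hausdorff dimension $s$ on which \eqref{estinc} fails along a sequence of scales, and one verifies the Frostman bound $\nu(B(x,r))\lesssim r^{s}$ so that the set genuinely has dimension $s$. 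Taking $\nu_\epsilon$ to be normalized Lebesgue measure on the union of balls centred at $p_1,\dots,p_N$, the left side of \eqref{estinc} is comparable to $N^{-(k+1)}$ times the number $M$ of ordered $(k+1)$-tuples of centres spanning a nondegenerate $k$-simplex whose edge lengths lie within $\epsilon$ of $\{t_{ij}\}$. Thus the target reduces to the combinatorial estimate $M\gg\epsilon^{\binom{k+1}{2}}N^{k+1}=\epsilon^{\binom{k+1}{2}-s(k+1)}$.

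I would place the centres on (a Cantor-regular subset of) a round sphere $S^{d-1}$ and choose $\{t_{ij}\}$ to be \emph{degenerate} in the sense generalizing Mattila's use of the diameter. For $k=1$ the gain comes from taking $t_{12}$ equal to the diameter: near two antipodal points the chord length is stationary, so the partners of a given point lying within $\epsilon$ of the diameter fill a cap of radius $\sim\sqrt\epsilon$ rather than $\sim\epsilon$. The correct generalization is to let $\{t_{ij}\}$ be the edge lengths of a $k$-simplex inscribed in $S^{d-1}$ whose circumradius equals the radius of the sphere, equivalently a simplex whose $k+1$ vertices lie on a great $(k-1)$-subsphere $S^{d-1}\cap V$ with $V$ a $k$-dimensional subspace through the centre. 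This is exactly the stratum on which the edge-length map $\Phi(\bx_1,\dots,\bx_{k+1})=(|\bx_i-\bx_j|)_{i<j}$ is critical (the boundary of its image, where the circumradius is maximal), and for $k=1$ it recovers the antipodal pair.

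The heart of the matter is that displacing the vertices off the subsphere $V$ changes every edge length only to second order: writing $\bx_i=\xi_i+\eta_i$ with $\xi_i\in V$ and $\eta_i\in V^{\perp}$ small, a short computation gives $|\bx_i-\bx_j|^2=|\xi_i-\xi_j|^2+Q_{ij}(\eta)$ with $Q_{ij}$ homogeneous quadratic in the heights $\eta$. Hence the tuples whose profile is within $\epsilon$ of the degenerate $\{t_{ij}\}$ are only $\sqrt\epsilon$-thick, not $\epsilon$-thick, in the height directions that $Q$ sees, which is what produces the excess over the borderline count $\epsilon^{\binom{k+1}{2}}$. Separating the height directions tangent to the isometry orbit of the configuration (which leave $\Phi$ unchanged and stay free) from the transverse ones yields the gain. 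Carrying out this count on a Cantor-regular subset of the sphere and tuning its dimension, I expect the gain to beat borderline precisely when $s<\tfrac{d+1}{2}$, with the hypothesis $k\le d\le 2k+1$ entering as the range in which the number of vertices is large enough, $k+1\ge\tfrac{d+1}{2}$, for the quadratic degeneracy to outweigh the loss incurred by lowering the dimension to $\tfrac{d+1}{2}$.

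The main obstacle is the exact evaluation of this degeneracy: one must compute the rank of $Q$ modulo the infinitesimal isometries, i.e.\ how many independent height directions are genuinely compressed to scale $\sqrt\epsilon$, since that rank converts directly into the power of $\epsilon$ gained over $\epsilon^{\binom{k+1}{2}}$. A secondary but real difficulty is organizing the Cantor refinement of the sphere so that it simultaneously retains enough near-congruent degenerate simplices to realize the count $M$ and has Hausdorff dimension arbitrarily close to $\tfrac{d+1}{2}$ from below, uniformly across $k\le d\le 2k+1$, together with the Frostman upper bound pinning the dimension. Finally, the nondegeneracy and congruence conventions in the definition of $T_{k,d}$ must be respected, so that the tuples counted by $M$ really are nondegenerate $k$-simplices.
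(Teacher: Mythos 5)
Your engine is the right one --- a $k$-simplex lying in a $k$-plane $V$ has all edge lengths depending only quadratically on the heights, since with $\bx_i=\xi_i+\eta_i$, $\xi_i\in V$, $\eta_i\in V^{\perp}$ one has $|\bx_i-\bx_j|^2=|\xi_i-\xi_j|^2+|\eta_i-\eta_j|^2$, so the fibers of the edge-length map are $\sqrt\epsilon$-thick in $d-k$ directions per vertex --- and this is exactly the mechanism the paper exploits. But the proposal has two genuine gaps. First, you explicitly defer the decisive step: computing the rank of $Q$ modulo the isometry orbit and converting it into the count $M$, together with the Cantor refinement and Frostman bound. Without these the argument is scaffolding around a hole, since on a full sphere the degenerate-profile count is only borderline (for $k=1$, $d=3$ the pairs within $\epsilon$ of the diameter have measure $\asymp\epsilon$, exactly the order $\epsilon^{\binom{2}{2}}$, not larger), so all of the claimed gain lives in the part you did not do. Second, and fatally for part of the stated range, confining the support to a subset of $S^{d-1}$ caps the Hausdorff dimension at $d-1$, while the proposition requires sets of dimension arbitrarily close to $\frac{d+1}{2}$; since $\frac{d+1}{2}>d-1$ when $d<3$, your construction cannot cover the cases $k=1$, $d=2$ and $k=2$, $d=2$, both of which lie in the range $k\le d\le 2k+1$ (the latter being precisely the Greenleaf--Iosevich triangle case the proposition generalizes).

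The paper avoids both problems by not restricting to a hypersurface at all: in $\R^d$ every nondegenerate $k$-simplex spans a $k$-plane, so no inscribed ``critical'' profile is needed. It takes $E=F_{\alpha_1}\times\cdots\times F_{\alpha_d}$ with $F_\alpha=C_\alpha\cup(C_\alpha-1)$ and the orthogonal-corner simplex $x,\,x+e_1,\dots,x+e_k$ (so $t_{ij}\in\{1,\sqrt2\}$), with $V$ a coordinate subspace. Then the quadratically compressed directions are literally the last $d-k$ coordinates, each movable vertex is fattened into an axis-aligned box with $k$ sides $\epsilon$ and $d-k$ sides $\sqrt\epsilon$ of measure $\epsilon^{\sum_{i\le k}\alpha_i+\frac12\sum_{i>k}\alpha_i}$, and the free/isometry directions that worried you are absorbed simply by integrating over the base vertex $x$ --- no rank computation modulo isometries is ever needed, as the Pythagorean identity makes the rank $d-k$ per vertex by inspection. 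Beating $\epsilon^{\binom{k+1}{2}}$ forces $\sum_{i\le k}\alpha_i+\frac12\sum_{i>k}\alpha_i\le\frac{k+1}{2}$, and maximizing $\sum_i\alpha_i$ under this constraint and $0\le\alpha_i\le1$ yields $\alpha_i=(2k+1-d)/(2k)$ for $i\le k$ and $\alpha_i=1$ for $i>k$, of total dimension $\frac{d+1}{2}$; note that $d\le 2k+1$ enters exactly as the nonnegativity of the first $k$ exponents (arithmetically equivalent to, but mechanistically different from, your ``enough vertices'' heuristic $k+1\ge\frac{d+1}{2}$). If you wish to salvage your sphere picture, the repair is precisely to leave the sphere: trade the tangential directions for Cantor-tuned full-dimensional coordinates, at which point you have rediscovered the paper's construction.
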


We prove this proposition in Section \ref{sec:prooflowerboundincidencethm} by extending the constructions of Mattila and Greenleaf and Iosevich. We remark that the dimensional threshold obtained in the incidence theorems in \cite{GGIP} is $d-\frac{d-1}{2k}$, so for $k>1$, unlike the previously mentioned results, there is a gap between the threshold from the construction and the threshold from the incidence theorems.

\subsection{Lower bounds for Falconer type incidence theorems in convex domains}
The key ingredient in both Falconer's incidence theorem and the incidence theorem from \cite{GGIP} is that if $\sigma$ denotes the Lebesgue measure on the unit sphere, then
\begin{equation*} |\widehat{\sigma}(\xi)|\ \ll |\xi|^{-\frac{d-1}{2}}.\end{equation*}
This implies that both incidence theorems still hold if the Euclidean distance $|\cdot|$ is replaced by $\|\cdot \|_{B}$, where $B$ is a symmetric convex body with a smooth boundary and everywhere non-vanishing Gaussian curvature.

Mattila's construction, which shows his incidence theorem does not in general hold if the Hausdorff dimension of $E$ is strictly less than $\frac{d}{2}+\frac{1}{2}$, was originally proven in the case $d=2$ and extends to $d=3$ but does not seem to extend to higher dimensions. Iosevich and Senger \cite{IS} showed, building on a construction by Valtr \cite{V}, that the more general incidence theorem involving a norm derived from a symmetric convex body can fail if the Hausdorff dimension of $E$ is strictly less than $\frac{d}{2}+\frac{1}{2}$ for all $d\geq 2$. Our second and main result is the following theorem that establishes an analogous result in the case of triangles ($k=2$).

\begin{thm}\label{triangles}
For $d>3$ there exists a symmetric convex body $B$ with a smooth boundary and non-vanishing Gaussian curvature such that for any $s<(d+1)/2$, there exists a Borel measure $\mu_s$ such that $I_s(\mu_s) =O(1)$ 
 and
\begin{equation}\label{incidence_mu3}
\limsup_{\epsilon\rightarrow 0} \epsilon^{-3}\mu_s\times\mu_s\times \mu_s\{(\bx_1,\bx_2,\bx_3): 1-\epsilon \ \le \ \| \bx_i-\bx_j\|_B\ \le \ 1+\epsilon \;\; (i<j)\}\ = \ \infty;
\end{equation}
i.e., the incidence theorem fails.
\end{thm}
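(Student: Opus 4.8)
The plan is to reduce the failure of \eqref{incidence_mu3} to a lattice-point count and then to feed in a convex body whose unit sphere carries an extremal number of lattice points, generalizing the Valtr--Iosevich--Senger construction that settles the analogous distance statement. First I would fix a rapidly increasing sequence of integers $q_1<q_2<\cdots$ and, for a parameter $s'\in(s,(d+1)/2)$, let $\mu_s$ be the weak limit of the normalized Lebesgue measures on the thickenings of $q_j^{-1}\Z^d\cap[0,1]^d$ by cubes of side $\delta_j=q_j^{-d/s'}$. Choosing the $q_j$ lacunary enough makes $\mu_s$ a Frostman measure of exponent $s'$, so that $I_s(\mu_s)=O(1)$. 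The key reduction is that evaluating the triple integral at scale $\epsilon=\delta_j$ collapses it onto the lattice: the thinness $\delta_jq_j=q_j^{\,1-d/s'}\to0$ forces the three difference vectors of any contributing triple essentially onto the sphere $\partial(q_jB)$, and counting cubes gives
\[
\mu_s\times\mu_s\times\mu_s\{(\bx_1,\bx_2,\bx_3):\|\bx_i-\bx_j\|_B\in[1-\epsilon,1+\epsilon]\ (i<j)\}\ \gg\ q_j^{-2d}\,T(q_j),
\]
where $T(q)$ denotes the number of pairs $(u,v)\in(\Z^d)^2$ with $\|u\|_B=\|v\|_B=\|u+v\|_B=q$, i.e.\ the number of unit equilateral $B$-triangles with vertices in $q^{-1}\Z^d$.

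With this in hand, plugging $\epsilon=\delta_j$ into \eqref{incidence_mu3} yields $\epsilon^{-3}\mu_s^{\otimes3}\{\cdots\}\gg q_j^{\,3d/s'-2d}T(q_j)$, so the incidence estimate fails once $T(q_j)\gg q_j^{\,2d-3d/s'}$ along the sequence. Because $s'<(d+1)/2$ makes $2d-3d/s'$ strictly smaller than its threshold value $\tfrac{2d(d-2)}{d+1}$, it suffices to exhibit a single body $B$ (independent of $s$) and a sequence $q_j\to\infty$ with
\[
T(q_j)\ \gg\ q_j^{\frac{2d(d-2)}{d+1}}.
\]
This is the exponent I would target; it is calibrated so that the construction pays off precisely below dimension $(d+1)/2$, exactly mirroring the distance case, where the corresponding requirement is $\#\big(\partial(q_jB)\cap\Z^d\big)\gg q_j^{\frac{d(d-1)}{d+1}}$.

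For the body itself I would take the symmetric, smooth, positively curved $B$ produced by the higher-dimensional Valtr/Andrews construction, whose sphere $\partial(q_jB)$ meets $\Z^d$ in $M\asymp q_j^{\,d(d-1)/(d+1)}$ points for suitable $q_j$; this is the extremal count for lattice points on a strictly convex hypersurface and is exactly the input Iosevich--Senger use for distances. The genuinely new work is the triangle count $T(q_j)$. Here I would write the equilateral condition as the Diophantine system $\|u\|_B=\|v\|_B=\|u+v\|_B=q_j$ cut out by the defining form of $\partial B$ and estimate its number of solutions. A heuristic --- pick $u\in\Sigma:=\partial(q_jB)\cap\Z^d$ freely, then take $v\in\Sigma$ on the codimension-one slice $\|u+v\|_B=q_j$ --- predicts $T(q_j)\asymp q_j^{(2d^2-3d)/(d+1)}$, which beats the target $q_j^{(2d^2-4d)/(d+1)}$ by the safe margin $q_j^{\,d/(d+1)}$. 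Note that this mechanism is intrinsically higher-dimensional: two strictly convex planar curves meet in at most two points, so for $d=2$ one has only $T(q)=O(M)$ and the method cannot succeed, whereas once the sphere is a genuine hypersurface its codimension-one slices already carry polynomially many lattice points. This, together with the fact that the cases $d\le3$ are handled by Mattila's construction, is where the hypothesis $d>3$ enters.

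The main obstacle is precisely the lower bound $T(q_j)\gg q_j^{2d(d-2)/(d+1)}$. Turning the heuristic into a theorem requires a \emph{lower} estimate for the number of solutions of the Diophantine system --- guaranteeing that the third vertex $u+v$ returns to $\partial(q_jB)$ for a large proportion of admissible pairs, rather than merely bounding solutions from above --- while simultaneously keeping $B$ smooth with non-vanishing Gaussian curvature and $\Sigma$ as large as the extremal count permits. I expect this to demand either an explicitly parametrized family of triangles engineered into the Valtr-type construction, or a circle-method/divisor-type analysis with a provably positive main term; this is where the number-theoretic machinery does the real work, the measure-theoretic and calibration steps above being comparatively routine.
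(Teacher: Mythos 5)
Your exponent bookkeeping is correct (the target $T(q)\gg q^{2d(d-2)/(d+1)}$ at $s'\to(d+1)/2$ matches the paper's calibration, where the measure of the triple set works out to $\epsilon^{6s/(d+1)}$ against the required $\epsilon^3$), and your weak-limit measure is a reasonable variant of the one used in the paper. But the proposal has a genuine gap at exactly the point you yourself flag as ``the main obstacle'': the lower bound for the equilateral-triangle count $T(q_j)$ is never proved, only predicted by a slicing heuristic. That heuristic assumes that for a positive proportion of lattice points $u$ on $\partial(q_jB)$, the codimension-one slice $\{v:\|u+v\|_B=q_j\}$ again carries polynomially many lattice points, and no known result delivers this for Andrews-extremal bodies: the extremal count for lattice points on strictly convex hypersurfaces is an \emph{upper} bound, and a body realizing it along a sequence of dilates gives no control of the joint Diophantine condition $\|u\|_B=\|v\|_B=\|u+v\|_B=q$. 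Indeed, the paper's own Section \ref{sec:countingtetrahedrainconvexnorm} poses the analogous (only slightly more complicated) tetrahedron count as an open question, which is a fair indication that a ``circle-method analysis with a provably positive main term'' is not routine. A sketch that defers precisely the number-theoretic heart of the theorem to an unproved counting statement is a plan, not a proof.

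The paper closes this gap by rigging the construction so that the count linearizes. It takes the explicit Valtr body $B=B_U\cup B_L$ with paraboloid caps as in \eqref{BU}--\eqref{BL}, paired with the \emph{anisotropic} lattice $\curly{L}_n$ whose last coordinate is scaled by $n^{-2}$ rather than $n^{-1}$ (your isotropic $q_j^{-1}\Z^d$ forfeits this). Because differences of points on the paraboloid caps stay adapted to the lattice, the three unit-distance conditions for a triangle $(0,\bx,\by)$ with $\bx\in B_L$, $\by\in B_U$ collapse to the single bilinear equation $\sum_{i=1}^{d-1}x_iy_i=n^2/2$ subject to ball constraints, and Lemma \ref{lem:estimate} --- an elementary coprime-pair argument via the totient sum $\Phi(m)=3m^2/\pi^2+O(m\log m)$, applied to $m=x_{d-2}y_{d-2}+x_{d-1}y_{d-1}$ after letting the remaining $2(d-3)$ variables range over an interval chosen to put $m\in[n^2/6,n^2/4]$ --- yields the $\Omega(n^{2d-4})$ solutions of Lemma \ref{lem:valtrtri}, hence $\Omega(n^{3d-3})$ unit triangles once the lattice density $n^{d+1}$ per unit cube is factored in. That explicit choice of body and lattice is the missing idea in your proposal; without it your reduction leaves a counting problem plausibly as hard as the theorem itself. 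Two smaller points: your explanation of the hypothesis $d>3$ (planar slices) is not what drives it here --- in the paper $d=3$ fails because there are no free variables left and the constant $c_\lambda$ vanishes at $\lambda=1$, while $d\le 3$ is anyway covered by Mattila's construction; and the paraboloid body must in principle still be smoothed where the two caps meet to literally satisfy the smoothness and curvature hypotheses, a point your generic-body framing avoids but the explicit construction must absorb.
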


Here $I_s(\mu_s)$ denotes the energy integral
\begin{equation*}
I_s(\mu_s)\ = \ \int\int \abs{\bx-\by}^{-s}\,d\mu_s(\bx)\,d\mu_s (\by),
\end{equation*}
and the condition $I_s(\mu_s)=O(1)$ simply means that the measure $\mu_s$ is supported on a set of Hausdorff dimension at least $s$.

We prove this proposition in Section \ref{sec:countingtrianglesinconvexnorm}. The main ingredient in the proof is some interesting number theory that arises when we count the number of equilateral triangles in this convex norm. We attempted to extend this result to tetrahedra but encounter a harder number theory problem. In Section \ref{sec:countingtetrahedrainconvexnorm} we set up the problem and pose the number theory problem that can resolve it.

\subsection{Notation}

Vector quantities will be denoted in boldface type, e.g. $\bx$ or $\by_j$.  
The notation $f=O(g)$, $f\ll g$, $g \gg f$ and $g=\Omega(f)$ have the usual meaning, 
that there is a positive constant $C$ so
that $|f| \le C |g|$ throughout the domain of $f$.  If the constant $C$ depends on any parameter,
then this is indicated by a subscript, e.g. $f(x) = O_\epsilon (x^{1+\epsilon})$.  The notation
$f \asymp g$ means that both $f\ll g$ and $g\ll f$ hold, that is, there are positive constants
$c_1,c_2$ such that $c_1 g \le f \le c_2 g$ (we can say that $f$ and $g$ have the same order).


\section{Proof of Proposition \ref{prop:lowerboundincidencethm}}\label{sec:prooflowerboundincidencethm}

We proceed by generalizing the example of Mattila presented in \cite{IS}, which was introduced in \cite{M85}, and generalized to triangles in the plane in \cite{GI}. For $0\le \alpha\le 1$, let $C_{\alpha}$ denote the standard $\alpha$-dimensional Cantor set contained in the interval $[0,1]$. Set $F_{\alpha_i}= C_{\alpha_i} \cup \qw{C_{\alpha_i}-1}$, and let $E_d=F_{\alpha_1}\times\cdots \times F_{\alpha_d}$, where we give $E$ the product measure arising from the the $\alpha$-dimensional Hausdorff measure, $\mc{H}^\alpha$, on $F_\alpha$. Hence $\dim_{\mc{H}} E_d=\sum_{i=1}^{d} \alpha_i$ \cite{M95}.

Now fix a point $x$ in $E$, and pick out a $k$-simplex of $E$ containing $x$ so that each of the edges from $x$ of the simplex have length $1$ and they are all orthogonal at $x$. Then we may fatten each of the nodes of the simplex besides $x$ to an $\epsilon\times\cdots\times \epsilon\times \sqrt{\epsilon}\times\cdots \times \sqrt{\epsilon}$ box, where there are $k$ sides of length $\epsilon$ and $d-k$ sides of length $\sqrt{\epsilon}$. Each of the points within these boxes form a $k+1$ simplex along with $x$. Further, each axis aligned box has measure
\begin{equation*}
\epsilon^{\sum_{i=1}^k \alpha_i +\sum_{i=k+1}^d \alpha_i/2}.
\end{equation*}
Thus the combined measure of each of the $k$ boxes we have selected is
\begin{equation}\label{llll}
\epsilon^{k\qw{\sum_{i=1}^k \alpha_i +\sum_{i=k+1}^d \alpha_i/2}}.
\end{equation}
Integrating over all possible values of $x$, we see that \eqref{llll} is a lower bound for the left hand side of \eqref{estinc}. In order for this bound to be a larger order of magnitude than $\epsilon^{\binom{k+1}{2}}$, we must have
\begin{equation*}
\sum_{i=1}^k \alpha_i +\sum_{i=k+1}^d \alpha_i/2\ \le \ \frac{k+1}{2},
\end{equation*}
while simultaneously satisfying $\dim_{\mc{H}} E \le \frac{d+1}{2}$ and $0\le \alpha_i\le 1$. Hence for $1\le i\le k$ set $\alpha_i$ equal to $(2k+1-d)/2k$ and for $k+1\le i\le d$ set $\alpha_i$ equal to $1$. Then $\sum_{i=1}^d \alpha_i=\frac{d+1}{2}$ while the previous sum is equal to $\frac{k+1}{2}$. Note that the choice of the first $k$ values of $\alpha_i$ gives us the restriction that $d\leq 2k+1.$



\section{Counting Triangles in a Convex Norm}\label{sec:countingtrianglesinconvexnorm}

To begin, we define a convex body, $B$, which induces a norm on $\R^d$. Let
\begin{align}
\label{BU}
B_U&\ = \ \left\{(x_1,x_2,\dots,x_d)\in \R^d: \sum_{i=1}^{d-1} x_i^2\ \le \ 1  \text{ and } x_d\ = \ 1-(x_1^2+\cdots+x_{d-1}^2)\right\}\\
\label{BL}
B_L&\ = \ \left\{(x_1,x_2,\dots,x_d)\in \R^d: \sum_{i=1}^{d-1} x_i^2\ \le \ 1  \text{ and } x_d\ = \ -1+(x_1^2+\cdots+x_{d-1}^2)\right\}.
\end{align}

Then $B_L=-B_U$, and define $B=B_U\cup B_L$ and the induced norm $\norm{\cdot}_B$ with unit ball $B$. In other words, the point $(x_1,...,x_d)$ in $\R^d$ is at unit distance from the origin if $\sum_{i=1}^{d-1} x_i^2\le 1$ and either $x_{d}=1-\sum_{i=1}^{d-1} x_i^2$ or $x_{d}=-1+\sum_{i=1}^{d-1} x_i^2$, depending on whether $\bx$ lies on the upper or lower hemisphere of the unit paraboloid, respectively. Hence the points $\mathbf{x}$ and $\mathbf{y}$ are at unit distance if $\mathbf{x}-\mathbf{y}$ lies on the unit paraboloid.

\newcommand{\LL}{\curly{L}}
Now consider the lattice
\begin{equation*}
\LL_n = \ \left\{\pez{\frac{i_1}{n},\cdots ,\frac{i_{d-1}}{n},\frac{i_d}{n^2}}: (i_1,\dots,i_{d})\in \Z \right\}.
\end{equation*}

\begin{lemma}\label{lem:valtrtri}The number of unit equilateral triangles $(0,\bx,\by)$ with $\bx,\by\in B\cap \LL_n$ is $\Omega(n^{2d-4})$ for $d>3$.
\end{lemma}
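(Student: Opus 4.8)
The plan is to translate the geometric condition into a single Diophantine equation and then count its solutions. Write a point of $\LL_n$ as $\bx=(i_1/n,\dots,i_{d-1}/n,i_d/n^2)$ with $(i_1,\dots,i_d)\in\Z^d$, and abbreviate $\bx'=(i_1,\dots,i_{d-1})$ and $S_{\bx}=\sum_{\ell=1}^{d-1}i_\ell^2$; similarly for $\by$. From the description of $B$, a lattice point $\bx$ satisfies $\norm{\bx}_B=1$ precisely when it lies on $B_U\cup B_L$, i.e. when $S_{\bx}\le n^2$ and $i_d=\pm(n^2-S_{\bx})$ (the sign recording the hemisphere). Thus the unit-norm points of $\LL_n$ on $B_U$ are in bijection with the integer vectors $\bx'$ in the ball $S_{\bx}\le n^2$, of which there are $\asymp n^{d-1}$.

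For a lower bound it suffices to exploit one hemisphere configuration. First I would take $\bx\in B_U$, $\by\in B_L$ and impose $\bx-\by\in B_U$. Substituting $i_d=n^2-S_{\bx}$, $j_d=-n^2+S_{\by}$ together with the paraboloid equation for $\bx-\by$, the condition $\norm{\bx-\by}_B=1$ collapses, after the $S_{\bx},S_{\by}$ and one $n^2$ cancel, to the single bilinear equation
\begin{equation*}
\bx'\cdot\by'\ =\ \sum_{\ell=1}^{d-1} i_\ell j_\ell\ =\ \frac{n^2}{2}\ =:\ m,
\end{equation*}
so I would take $n$ even. The three side-length constraints $S_{\bx},S_{\by}\le n^2$ and $\sum_\ell (i_\ell-j_\ell)^2\le n^2$ are then automatic: the last quantity equals $S_{\bx}+S_{\by}-2m=S_{\bx}+S_{\by}-n^2$, which is $\le n^2$ by the first two bounds, and is $\ge 0$ because $\abs{\by'}\ge m/\abs{\bx'}$ (Cauchy--Schwarz) gives $S_{\bx}+S_{\by}\ge \abs{\bx'}^2+(m/\abs{\bx'})^2\ge 2m=n^2$ by AM--GM. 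Hence counting unit equilateral triangles $(0,\bx,\by)$ reduces to counting integer solutions of $\bx'\cdot\by'=m$ with $\abs{\bx'},\abs{\by'}\le n$.

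To count these I would slice by $\bx'$: for each primitive $\bx'$ with $\abs{\bx'}\asymp n$ (so that $\gcd(\bx')=1$ divides $m$ and the affine hyperplane $\{\by':\bx'\cdot\by'=m\}$ meets the ball $\abs{\by'}\le n$ in a $(d-2)$-dimensional disk of radius $\asymp n$), the integer points $\by'$ on this hyperplane form a coset of the rank-$(d-2)$ lattice $\{\by:\bx'\cdot\by=0\}$, whose covolume is $\abs{\bx'}\asymp n$. The expected number of such $\by'$ in the disk is therefore $\asymp n^{d-2}/n=n^{d-3}$, and summing over the $\asymp n^{d-1}$ admissible $\bx'$ gives the desired total $\asymp n^{2d-4}$. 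Equivalently, the substitution $\mathbf{u}=\bx'+\by'$, $\mathbf{v}=\bx'-\by'$ turns the equation into $\abs{\mathbf{u}}^2-\abs{\mathbf{v}}^2=2n^2$, so the count becomes a shifted correlation $\sum_N r_{d-1}(N)\,r_{d-1}(N+2n^2)$ of sums-of-squares representation numbers, whose main term is again of size $n^{2d-4}$.

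The main obstacle is turning the per-slice heuristic into a genuine lower bound. The volume estimate $\asymp n^{d-3}$ for lattice points in the $(d-2)$-disk is only valid once the radius $\asymp n$ dominates the largest successive minimum of the section lattice, so the real work is to control the skewed sections (equivalently, to pin down the size of the representation numbers $r_{d-1}$ and their shifted correlations, most delicately for small $d$, such as the $r_3$ case $d=4$); this is exactly where the number-theoretic machinery enters. It is also where the hypothesis $d>3$ is used: it makes the hyperplane sections at least two-dimensional, so the per-$\bx'$ count is $\gg 1$ and robust, whereas for $d=3$ the sections are lines containing $O(1)$ (and typically no) lattice points and the argument collapses. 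Finally I would discard the degenerate solutions $\by'\parallel\bx'$, which form a lower-order family, leaving $\gg n^{2d-4}$ genuine non-degenerate triangles.
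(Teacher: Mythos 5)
Your reduction is exactly the paper's: the same choice of one point on each hemisphere, the same cancellation producing the single bilinear equation $\sum_{i=1}^{d-1}x_iy_i=n^2/2$ (equation \eqref{eq:goal} of the paper), and the same ball constraints. Your observation that the third constraint $\sum_{i=1}^{d-1}(x_i-y_i)^2\le n^2$ is automatic, since it equals $S_{\bx}+S_{\by}-n^2\le n^2$, is correct and in fact slightly cleaner than the paper, which verifies the constraints by hand with explicit numerical margins (also, nonnegativity of a sum of squares needs no Cauchy--Schwarz). Up to this point there is no problem.

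The gap is in the counting, which is the entire content of the lemma, and you concede it yourself (``the main obstacle is turning the per-slice heuristic into a genuine lower bound''). This is not a deferrable technicality: the uniform per-slice estimate $\gg n^{d-3}$ is false. For a skewed primitive direction such as $\bx'=(N,1,0,\dots,0)$ with $N\asymp n$, the section lattice $\{\by':\bx'\cdot\by'=0\}$ has a successive minimum of order $n$, and since every point of the affine coset $\{\by':\bx'\cdot\by'=m\}$ has Euclidean norm at least $m/\abs{\bx'}\gg n$, the coset can meet the ball $\abs{\by'}\le n$ in no lattice point at all; so one must either show bad slices are rare or lower-bound the shifted correlation $\sum_N r_{d-1}(N)r_{d-1}(N+2n^2)$ directly, and in the critical case $d=4$ that means effective lower bounds for correlations of $r_3$, whose values fluctuate with class numbers and where effective estimates collide with Siegel-type ineffectivity. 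The paper sidesteps all of this with an elementary device absent from your sketch: freeze $x_j,y_j\in[\sqrt{a}\,n,\sqrt{b}\,n]$ for $1\le j\le d-3$ with $a=\frac{1}{4(d-3)}$, $b=\frac{1}{3(d-3)}$, giving $\Omega(n^{2d-6})$ choices and reducing the equation to $x_{d-2}y_{d-2}+x_{d-1}y_{d-1}=m$ with $n^2/6\le m\le n^2/4$; then Lemma \ref{lem:estimate}, proved from the totient summatory formula $\Phi(m)=\frac{3m^2}{\pi^2}+O(m\log m)$ of Hardy--Wright (Lemma \ref{330}), yields $\gg n^2$ solutions for each such $m$, because each of $\gg m$ coprime pairs in $[\sqrt{m},\lambda\sqrt{m}]$ furnishes a solution of the linear equation with all four remaining variables of size $O(n)$, and the constraints are then checked directly. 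This gives $\Omega(n^{2d-4})$ effectively, elementarily, and uniformly for all $d>3$ (the freezing needs $d-3\ge1$, which is also exactly where the hypothesis enters). Until you supply a substitute for this step --- an averaged slicing argument or a proved correlation bound --- your write-up establishes the reduction but not the lemma.
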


\begin{proof}
It suffices to examine choices of points $\mathbf{x}$ on $B_L$ and $\mathbf{y}$ on $B_U$.  These points are of the form
\begin{align*}
\mathbf{x} &\ = \    \left( \frac{x_1}{n}, \ldots, \frac{x_{d-1}}{n},-1 + \sum_{i=1}^{d-1}\frac{x_i^2}{n^2}\right)\nonumber\\
\mathbf{y} &\ = \   \left( \frac{y_1}{n}, \ldots, \frac{y_{d-1}}{n},\,\,\,\,\,1 - \sum_{i=1}^{d-1}\frac{y_i^2}{n^2}\right)
\end{align*}
and are depicted in Figure \ref{fig:balls}. Now we must ensure that $\mathbf{x}-\mathbf{y}$ is also of unit length, hence we must have that
\begin{equation}\label{eq:req}
\sum_{i=1}^{d-1} (x_i-y_i)^2\le n^2,
\end{equation}
and $\mathbf{x}-\mathbf{y}$ lies on either $B_{U}$ or $B_{L}$. 

\begin{equation*}\mathbf{x}-\mathbf{y} \ = \  \left( \frac{x_1 - y_1}{n}, \ldots, \frac{x_{d-1} - y_{d-1}}{n}, -2 + \sum_{i=1}^{d-1}\frac{x_i^2 + y_i^2}{n^2}\right),\end{equation*}
and so for this vector to lie on $B_L$, the final component of this vector must satisfy:
\begin{equation*}-2 + \sum_{i=1}^{d-1}\frac{x_i^2 + y_i^2}{n^2}\ =\ -1 + \sum_{i=1}^{d-1}\frac{(x_i - y_i)^2}{n^2}.\end{equation*}
This yields a Diophantine equation whose number of solutions we will bound below:
\begin{equation}\label{eq:goal}
\sum_{i=1}^{d-1}x_iy_i\ =\ \frac{n^2}{2},
\end{equation}
where the $x_i,y_i$ are integers satisfying $\sum_{i=1}^{d-1}x_i^2\le n^2$, $\sum_{i=1}^{d-1}y_i^2\le n^2$, and $\sum_{i=1}^{d-1} (x_i-y_i)^2\le n^2$.

\begin{figure}
\begin{center}
\includegraphics{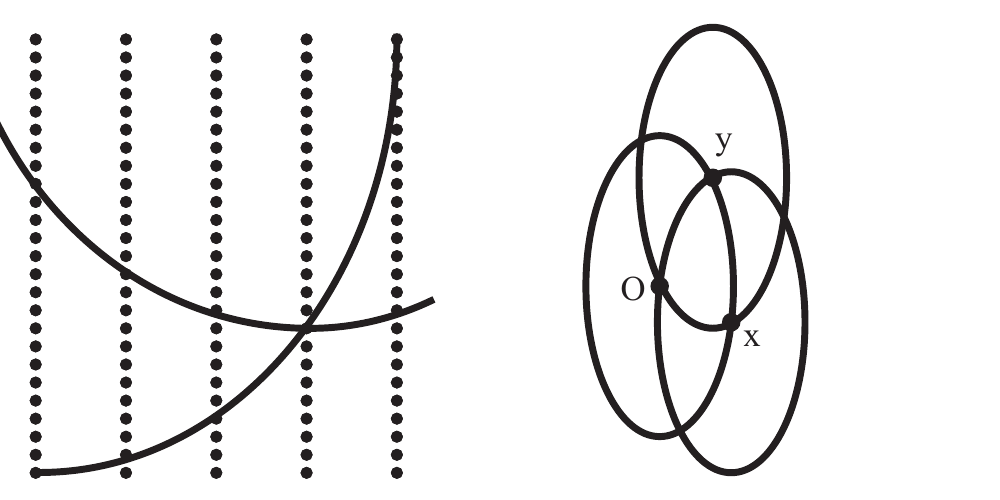}
\caption{Illustration of the configuration of $\mathbf{x}$ and $\mathbf{y}$ in this convex norm.}\label{fig:balls}
\end{center}
\end{figure}

We now apply the following.

\begin{lemma}\label{lem:estimate}
Fix $\lambda > 1$.  Let $a_{n}$ be the number of solutions to
\begin{equation*}
n \ =\ xi+yj,
\end{equation*}
where each of $i,j,x,y\in [-\lambda\sqrt{n},\lambda\sqrt{n}]\cap \Z$.  Then $a_n\geq c_\lambda n+O\left(\sqrt n \log n\right)$, where
$$c_\lambda =\frac{9\lambda^2-(6+2\pi)\lambda + (2\pi-3)}{4\pi^2}.$$
\end{lemma}
\begin{proof}
To prove this, we will use the following lemma, which is Theorem 330 from Hardy and Wright, \cite{HW}.

\begin{lemma}\label{330}
Let $\phi(m)$ denote the number of positive integers less than $n$ that are coprime to $n$. Then $$\Phi(m) = \sum_{j=1}^m \phi(m) = \frac{3m^2}{\pi^2} + O(m \log m).$$
\end{lemma}

Consider the set $A = [\sqrt{n}, \lambda\sqrt{n}]$. We want a lower bound on the number of pairs of positive integers, $i,j \in A$, that are relatively prime. Set $m_1 =\left\lceil\left(\frac{\lambda+1}{2}\right) \sqrt n\right\rceil$ and $m_2 =\left\lfloor\lambda \sqrt n\right\rfloor$. Given an integer, $m\in A$, the we can estimate the number of integers in $A$ that are relatively prime to it from below by $\phi(m) - \sqrt n.$ Summing this over the $\frac{1}{2}(\lambda -1)\sqrt n$ terms in the upper half of $A$ gives us a lower bound on the number of coprime pairs of integers in $A$: 
$$\Phi(m_2)-\Phi(m_1)-\left(\frac{1}{2}(\lambda-1)\sqrt n\right)\sqrt n.$$
We apply Lemma \ref{330} twice to get that this is bounded below by
$$\frac{3\left(\lambda^2-\frac{\lambda^2+2\lambda+1}{4}\right)n}{\pi^2}-\frac{(\lambda-1)n}{2}+O(\sqrt n \log n).$$
Putting this together, we see that for sufficiently large $n$ there will be at least
$$\left(\frac{9\lambda^2-(6+2\pi)\lambda + (2\pi-3)}{4\pi^2}\right)n + O\left(\sqrt n \log n\right)= c_\lambda n + O\left(\sqrt n \log n\right)$$
pairs of coprime numbers in $A$. We now show that each such pair gives rise to a solution. Namely, there is a solution of $ix+jy=n$ with $0<x\leq j$. We have $y<n/j \leq \sqrt{n}$ and
\begin{equation*}
 y \geq \frac{n-ij}{j} =  \frac{n}{j}-i \geq -\lambda \sqrt{n}.\qedhere
\end{equation*}
\end{proof}

We can now show that there are $\Omega(n^{2d-4})$ solutions to the system \eqref{eq:goal}.
 We pick a range $[\sqrt{a} n,\sqrt{b}n]$ for the $x_j$ and $y_j$ to vary in where $1\le j\le d-3$, so that to each value they assume we may apply the previous lemma and obtain on the order of $n^2$ solutions in the remaining variables $x_{d-2},x_{d-1},y_{d-2},y_{d-1}$.  We put $a=\frac{1}{4(d-3)}$ and $b=\frac{1}{3(d-3)}$ and find that there are $\alpha n^{2d-6}$ choices of the $x_j$ and $y_j$, with $j=1, \dots, d-3$, with
$$\alpha = \left(\sqrt{\frac{1}{3(d-3)}}-\sqrt{\frac{1}{4(d-3)}}\right)^{2d-6},$$
that satisfy
\begin{equation}\label{eq:s1}
\frac{n^2}{6}\ \le\ \frac{n^2}{2}-\sum_{i=1}^{d-3} x_jy_j\ \le\ \frac{n^2}{4},
\end{equation}
and
\begin{equation}\label{eq:s2}
\sum_{j=1}^{d-3} x_j^2\ \le\ \frac{n^2}{3},\quad
\sum_{j=1}^{d-3} y_j^2\ \le\ \frac{n^2}{3},\quad
\sum_{j=1}^{d-3} (x_j-y_j)^2\ \le\ \frac{n^2}{6}.
\end{equation}

Hence for a given choice of $x_j,y_j\in [\sqrt{a} n,\sqrt{b}n]$, we are left solving an equation
\begin{equation*}
m\ = \ x_{d-2}y_{d-2}+x_{d-1}y_{d-1},
\end{equation*}
where $n^2/6\le m\le n^2/4$.  Let $\lambda=1.05$.  By Lemma \ref{lem:estimate} for each $m$ in this range there are at least $c_\lambda n^2+ O\left( n \log n\right)$ (with $c_\lambda = c_{1.05} \approx .308\dots)$ solutions $x_{d-2},y_{d-2},x_{d-1},y_{d-1},$ with the absolute value of each of these numbers being less than $\lambda n / 2$.  For each such solution, we have
$$\sum_{j=1}^{d-1} x_j^2\le (1/3+\lambda^2/2)n^2\le n^2,$$
and similarly for $\sum_{j=1}^{d-1} y_j^2$. Further, by using the lower bound on $m$ to estimate the cross term and the bounds on the absolute values of $x_{d-2},y_{d-2},x_{d-1},$ and $y_{d-1},$ we obtain that
\begin{align*}
\sum_{j=1}^{d-1} (x_j-y_j)^2 &\leq \left(\sum_{j=1}^{d-3} (x_j-y_j)^2\right) + \left(x_{d-2}^2-2x_{d-2}y_{d-2} + y_{d-2}^2\right)+\left(x_{d-1}^2-2x_{d-1}y_{d-1} + y_{d-1}^2\right)\\
&\leq \frac{n^2}{6} + x_{d-2}^2+y_{d-2}^2+x_{d-1}^2+y_{d-1}^2 -2m \leq \left(\frac{1}{6}+\lambda^2-\frac{1}{3}\right)n^2\le .94n^2<n^2,
\end{align*}
as required by \eqref{eq:req}.

Hence there are at least $c_\lambda n^2+O\left(n \log n\right)$ choices of $x_{d-2},x_{d-1},y_{d-2},y_{d-1}$ that allow us to solve the equation with the three additional constraints. Thus, as there are $\alpha n^{2d-6}$ choices of $x_{j},y_{j}\in [\sqrt{a} n,\sqrt{b}n]$ for $1\le j\le d-3$, satisfying \eqref{eq:s1} and \eqref{eq:s2}, there are $c_\lambda \alpha n^{2d-4}+O\left(n \log n\right)=\Omega\left(n^{2d-4}\right)$ solutions to the equation \eqref{eq:goal}.
\end{proof}

\begin{proof}[Proof of Theorem \ref{triangles}]
We first count the total number of unit triangles in $\LL_n$ that have a point in $[0,1)^d$. 
For each one of the $n^{d+1}$ points $P$ in $\LL_n\cap [0,1)^d$, the translation of $\LL_n$
by $P$ is just $\LL_n$ itself, hence by  Lemma \ref{lem:valtrtri} there are $\Omega(n^{2d-4})$ distinct unit triangles in $\LL_n$ with one point being $P$.
 So, we have  $\Omega(n^{3d-3})$ total unit triangles in $\LL_n$ which include a point in $[0,1)^d$.

We now construct the measure $\mu_s$---this construction is analogous to that in \cite{IS}, except that we need to consider a larger configuration of points. To do this, partition space into lattice cubes of side length $\epsilon^{s}=1/n^{d+1}$ for some large integer $n$ and $\frac{d}{2}\le s\le \frac{d+1}{2}$ as this range is non-trivial.  Now set $\mu_s$ to be the Lebesgue measure on those cubes containing a point of $\LL_n$, our lattice, normalized by $\epsilon^{s-d}$ and furthermore restricted to lie in some large box, i.e.,
\begin{equation*}
d\mu_s(x)\ = \ \epsilon^{s-d}\sum_{p\in \LL_n\cap[-2,2]^d} \chi_{R_\epsilon(p)}(x)\,dx,
\end{equation*}
where $R_{\epsilon}(p)$ denotes the cube of side-length $\epsilon$ centered at $p$. It then follows by Lemma 2.1 in \cite{IS} that
\begin{equation*}
I_s(\mu_s) =O(1),
\end{equation*}
and moreover that by normalizing we can take $\int \,d\mu_s=1$ as well.

We now have that
\begin{equation*}
\mu_s\times\mu_s\times \mu_s\{(\bx_1,\bx_2,\bx_3): 1 \le \| \bx_i-\bx_j\|_B \le 1+\epsilon \; (i<j)\} \gg \epsilon^{3s}n^{3(d+1)-6}= \epsilon^{s\frac{6}{d+1}},
\end{equation*}
as each triangle we counted in our point configuration contributes $\epsilon^{3s}$ to the measure.  Therefore, the estimate \eqref{incidence_mu3} fails for all $s< \frac{d+1}{2}$,
for every $d>3$.
\end{proof}


\section{Counting Tetrahedra in a Convex Norm}\label{sec:countingtetrahedrainconvexnorm}

In this section we give the natural generalization of the previous section's argument to counting tetrahedra in the Valtr construction. We present a system of equations which governs the number of tetrahedra and present this number-theoretic problem as an open question.

Define $B_U$ and $B_L$ as above in (\ref{BU}), (\ref{BL}).  We now consider the unit ball centered about the origin, and  we choose points $\mathbf{x}$ and $\mathbf{y}$ on $B_U$, and $\mathbf{z}$ on $B_L$.

Recall that these points are of the form
\begin{align*}
\mathbf{x} &\ = \    \left( \frac{x_1}{n},\ \ldots,\ \frac{x_{d-1}}{n},\,\,\,\,1 - \sum_{i=1}^{d-1}\frac{x_i^2}{n^2}\right)\nonumber\\
\mathbf{y} &\ = \   \left( \frac{y_1}{n},\ \ldots,\ \frac{y_{d-1}}{n},\,\,\,\,\,1 - \sum_{i=1}^{d-1}\frac{y_i^2}{n^2}\right)\nonumber\\
\mathbf{z} &\ = \    \left( \frac{z_1}{n},\ \ldots,\ \frac{z_{d-1}}{n}, -1 + \sum_{i=1}^{d-1}\frac{z_i^2}{n^2}\right).
\end{align*}

We must ensure that the relevant vectors ($\mathbf{x}-\mathbf{y}$),  ($\mathbf{z}-\bx$), and  ($\mathbf{z}-\by$) are all of unit length.  For example, we may insist that
\begin{equation*} 
\mathbf{x}-\mathbf{y} \ = \  \left( \frac{x_1 - y_1}{n},\ \ldots,\ \frac{x_{d-1} - y_{d-1}}{n},\ \sum_{i=1}^{d-1}\frac{y_i^2 - x_i^2}{n^2}\right),\end{equation*}
lie on $B_U$, which implies that
\begin{equation*}\sum_{i=1}^{d-1}\frac{y_i^2 - x_i^2}{n^2}\ =\ 1 - \sum_{i=1}^{d-1}\frac{(y_i - x_i)^2}{n^2}.\end{equation*}
Simplifying the above equation and performing similar computations for the other cases ($\mathbf{z}-\bx$) and  ($\mathbf{z}-\by$) we obtain two additional equations, which are of a different form as $\mathbf{x},\mathbf{y}$ and $\mathbf{z}$ lie on opposite hemispheres. This yields our concluding question.

\question{ Let $a_n$ be the number of solutions to the following system of equations
\begin{align*}
&\sum_{i=1}^{d-1}y_i^2 \,\,\, \ =\ \frac{n^2}{2} + \sum_{i=1}^{d-1}x_iy_i\nonumber\\
&\sum_{i=1}^{d-1}x_iz_i \ =\ \frac{n^2}{2} \nonumber\\
&\sum_{i=1}^{d-1}y_iz_i \ =\  \frac{n^2}{2},
\end{align*}
where each $x_i, y_i, z_i \in [-n,n]$, and $\sum_{i=1}^{d-1} x_i^2\le n^2$, $\sum_{i=1}^{d-1} (x_i-y_i)^2\le n^2$, etc.  Is $a_n$ at least $\Omega(n^{3d - 6})$?
}

Given this solution, the expression of interest is
\begin{equation*}\epsilon^{4s}\cdot N^{4-\frac{9}{d+1}}\  \ll \epsilon^6.\end{equation*}



\begin{thebibliography}{AAAAA}

\bibitem[E05]{E05}
B. Erdo\u{g}an,
\emph{A bilinear Fourier extension theorem and applications to the distance set problem}, Internat. Math. Res. Notices (2005), no. 23, 1411--1425.

\bibitem[EIH]{EIH}
B. Erdo\u{g}an, A. Iosevich and D. Hart,
\emph{Multi-parameter projection theorems with applications to sums-products and finite point configurations in the Euclidean setting}, Spring volume in honor of Kostya Oskolkov's 65th birthday (2012).

\bibitem[F85]{F85}
K. Falconer, \emph{On the Hausdorff dimensions of distance sets}, Mathematika, \textbf{32} (1985), no. 2: 206--212.


\bibitem[F86]{F86}
K. Falconer, \emph{The Geometry of Fractal Sets}, Cambridge Tracts in Mathematics \textbf{85}, Cambridge University Press, Cambridge, 1986.

\bibitem[GIS]{GIS}
J. Garibaldi, A. Iosevich and S. Senger,
\emph{The Erd\H{o}s distance problem}, AMS Student Library Series, \textbf{56} (2011).

\bibitem[GGIP]{GGIP}
L. Grafakos, A. Greenleaf, A. Iosevich, E. A. Palsson,
\emph{Multilinear generalized Radon \mbox{transforms} and point configurations}, Forum Mathematicum, \textbf{27} (2015), 2323-2360.

\bibitem[GI]{GI}
A. Greenleaf and A. Iosevich,
\emph{On three point configurations determined by subsets of the Euclidean plane, the associated bilinear operator and applications to discrete geometry}, Analysis and PDE \textbf{5} (2012), no. 2, 397-409.

\bibitem[GILP15]{GILP15}
A. Greenleaf, A. Iosevich, B. Liu and E. A. Palsson,
\emph{A group-theoretic viewpoint on Erd\H os-Falconer problems and the Mattila integral}, Rev. Mat. Iberoam., \textbf{31} (2015), no. 3, 799-810.

\bibitem[GILP16]{GILP16}
A. Greenleaf, A. Iosevich, B. Liu and E. A. Palsson,
\emph{An elementary approach to simplexes in thin subsets of Euclidean space}, (2016), submitted, \bburl{http://arxiv.org/abs/1608.04777}.

\bibitem[GIM]{GIM}
A. Greenleaf, A. Iosevich and M. Mourgoglou,
\emph{On volumes determined by subsets of the Euclidean space}, Forum Mathematicum, \textbf{27} (2015), no 1, 635--646.


%

%

\bibitem[HW]{HW} G. H. Hardy and E. M. Wright,
\emph{An introduction to the theory of numbers}, Fifth Ed., Oxford Science Publications (1979).

\bibitem[IMP]{IMP}
A. Iosevich, M. Mourgoglou, and E. A. Palsson,
\emph{On angles determined by fractal subsets of the Euclidean space via Sobolev bounds for bi-linear operators}, Mathematical Research Letters, accepted for publication.

\bibitem[IS]{IS}
A. Iosevich and S. Senger,
\emph{Sharpness of Falconer's estimate and the single distance problem in $\Z_q^d$.}
Combinatorial and additive number theory--CANT 2011 and 2012, 63--77,
Springer Proc. Math. Stat., 101, Springer, New York, 2014. 


\bibitem[M85]{M85}
P. Mattila, \emph{On the Hausdorff dimension and capacities of intersections}, Mathematika, \textbf{32} (1985), 213--217.

\bibitem[M95]{M95}
P. Mattila,
\emph{Geometry of sets and measures in Euclidean spaces}, Cambridge Univ. Pr., 1995.

\bibitem[PS]{PS}
J. Pach and M. Sharir,
\emph{Combinatorial geometry and its algorithmic applications: the Alcala lectures}, volume 152 of Mathematical Surveys and Monographs. American Mathematical Society, Providence, RI, 2009.

\bibitem[V]{V}
P. Valtr,
\emph{Strictly convex norms allowing many unit distances and related touching questions}, 
unpublished manuscript,
\bburl{http://kam.mff.cuni.cz/~valtr/n.pdf}.

\bibitem[W99]{W99}
T. Wolff,
\emph{Decay of circular means of Fourier transforms of measures}, Int. Math. Res. Not.   (1999), no. 10, 547--567.


\end{thebibliography}
\end{document}